\documentclass[10pt]{amsart} 
\usepackage[all]{xy}     
\usepackage{amssymb}
\usepackage[english]{babel}

\setlength{\oddsidemargin}{0cm}
\setlength{\evensidemargin}{0cm}
\setlength{\topmargin}{0.5cm}
\setlength{\textwidth}{16cm}
\setlength{\textheight}{23cm}

\newtheorem{defi}{\bf D\scriptsize EFINITION \normalsize}
\newtheorem{theorem}{\bf T\scriptsize HEOREM \normalsize}
\newtheorem{lm}{\bf L\scriptsize EMMA \normalsize}
\newtheorem{dk}{\bf C\scriptsize OROLLARY \normalsize}
\newtheorem{rem}{\bf R\scriptsize EMARK \normalsize}
\newtheorem{exa}{\bf E\scriptsize XAMPLE \normalsize}
\newtheorem{pro}{\bf P\scriptsize ROBLEM \normalsize}
\newtheorem{prop}{\bf P\scriptsize ROPOSITION \normalsize}
\newtheorem{no}{\bf N\scriptsize OTE \normalsize}

\newenvironment{remark}{\begin{rem}\rm}{\end{rem}}
\def\kopr{\hfill\raisebox{3pt}{\framebox{$\star$}}}



\begin{document}

{\hskip 115mm  December 7, 2010 
\vskip 5mm}

\title{Topological entropy and irregular recurrence}

\author{Lenka Obadalov\'a}

\address{ Mathematical Institute, Silesian University, 
CZ-746 01 Opava, Czech Republic}

\email{lenka.obadalova@math.slu.cz}

\thanks{The research was supported, in part, by grant SGS/15/2010 from  the Silesian University in Opava.}

\begin{abstract} 
This paper is devoted to problems stated by Z. Zhou and F. Li in 2009. They concern relations between almost periodic, weakly almost periodic, and quasi-weakly almost periodic points of a continuous map $f$ and its topological entropy. The negative answer follows by our recent paper. But for continuous maps of the interval and other more general one-dimensional spaces we give more  results; in some cases, the answer is positive.

{\small {2000 {\it Mathematics Subject Classification.}}
Primary 37B20, 37B40, 37D45, 37E05.}
\end{abstract}

\maketitle


\section{Introduction}
\bigskip
 Let $(X,d)$ be a compact metric space, $I=[0,1]$ the unit interval, and $\mathcal C(X)$ the set of continuous maps $f :X\rightarrow X$.  By $\omega (f,x)$ we denote the {\it $\omega$-limit set} of $x$ which is the set of limit points of the {\it trajectory} $\{f ^{i}(x)\} _{i\ge 0}$ of $x$, where $f^{i}$ denotes the $i$th iterate of $f$. We consider sets $W(f)$ of {\it weakly almost periodic points} of  $f$, and $QW(f)$ of {\it quasi-weakly almost periodic points} of  $f$. They are defined as follows, see \cite{zhou}:
$$
W(f)= \left\{x \in X; \forall \varepsilon ~\exists N>0 \ \text{such that} \ \sum_{i=0}^{nN-1} \chi_{B(x,\varepsilon)}(f^i(x)) \geq n, \forall n >0\right\},
$$
$$
QW(f)= \left\{x \in X; \forall \varepsilon ~\exists N>0, \exists \{n_j\} \ \text{such that} \  \sum_{i=0}^{n_jN-1} \chi_{B(x,\varepsilon)}(f^i(x)) \geq n_j, \forall j >0\right\},
$$
where ${B(x,\varepsilon)}$ is the $\varepsilon$-neighbourhood of $x$,  $\chi_{A}$ the characteristic 
function of a set $A$, and $\{n_j\}$ an increasing 
sequence of positive integers. For $x \in X$ and $t>0$,  let
\begin{eqnarray}
\label{novew}
\Psi_x (f, t) &=& \liminf_{n \to \infty} \tfrac1n \# \{0\le j<n; d(x, f^j(x))<t\}, \\
\label{noveqw}
\Psi_x^* (f, t) &=& \limsup_{n \to \infty} \tfrac1n \# \{0\le j<n; d(x, f^j(x))<t\}.
\end{eqnarray}
Thus, $\Psi _x(f,t)$ and $\Psi ^*_x(f,t)$ are the {\it lower} and {\it upper Banach density} of the set $\{ n\in\mathbb N; f^n(x)\in B(x,t)\}$, respectively. In this paper we make of use more convenient definitions of $W(f)$ and $QW(f)$ based on the following lemma.

\begin{lm}
\label{l1}
Lef $f \in \mathcal C(X)$. Then

 {\rm (i)} \ $x \in W(f)$ if and only if $\Psi_x (f, t)>0$, for every $t>0$,

{\rm (ii)} \ $x \in QW(f)$ if and only if $\Psi_x^* (f, t)>0$, for every $t>0$.
\end{lm}

\begin{proof}
It is easy to see that, for every $\varepsilon>0$ and $N>0$,
\begin{equation}
\label{99}
\sum_{i=0}^{nN-1} \chi_{B(x,\varepsilon)}(f^i(x)) \geq n \
\ \ \text{if and only if} \ \ \ \#\{0 \leq j <nN; f^j(x) \in B(x, \varepsilon)\} \geq n.
\end{equation}
(i) If $x\in W(f)$ then, for every $\varepsilon >0$ there is an $N>0$ such that  the condition on the left side in (\ref{99}) is satisfied for every $n$. Hence, by the condition on the right, $\Psi _x(f,\varepsilon )\ge 1/N>0$. If $x\notin W(f)$
then there is an $\varepsilon >0$ such, that for every $N>0$, there is an $n>0$ such that the condition on the left side of (\ref{99}) is not satisfied. Hence, by the condition on the right,  $\Psi _x(f,t)<1/N\to 0$ if $N\to\infty$. Proof of (ii) is similar.
\end{proof}

\bigskip

Obviously, $W(f)\subseteq QW(f)$. The properties of $W(f)$ and $QW(f)$ were studied in the nineties by Z. Zhou et al, see \cite{zhou} for references. The points in $IR(f):=QW(f)\setminus W(f)$ are {\it irregularly recurrent points}, i.e., points $x$ such that $\Psi_x^*(f, t) >0 $ for any $t>0$, and $\Psi_x(f, t_0)=0$ for {\it some} $t_0>0$, see \cite{lenka}. 
Denote by $h(f)$ {\it topological entropy} of $f$ and by $R(f)$, $UR(f)$ and $AP(f)$ the set of {\it recurrent}, {\it uniformly recurrent} and {\it almost periodic} points of $f$, respectively. Thus, $x\in R(f)$ if, for every neighborhood $U$ of $x$, $f^j(x)\in U$ for infinitely many $j\in\mathbb N$, $x\in UR(f)$ if, for every neighborhood $U$ of $x$ there is a $K>0$  such that every interval $[n, n+K]$ contains a $j\in\mathbb N$ with $f^j(x)\in U$, and  $x\in AP(f)$, if for every neighborhood $U$ of $x$, there is a $k>0$ such that $f^{kj}(x)\in U$, for every $j\in\mathbb N$. Recall  that $x\in R(f)$ if and only if $x\in\omega (f,x)$, and $x\in UR(f)$ if and only if $\omega (f,x)$ is a {\it minimal set}, i.e.,  a closed set $\emptyset\ne M\subseteq X$ such that $f(M)=M$ and no proper subset of $M$ has this property. Denote by $\omega (f)$ the union of all $\omega$-limit sets of $f$.
The next relations follow by definition:
\begin{equation}
\label{eq10}
AP(f)\subseteq UR(f)\subseteq W(f)\subseteq QW(f)\subseteq R(f)\subseteq \omega (f)
\end{equation}
The next theorem will be used in Section 2. Its part (i) is proved in \cite{zhou2} but we are able to give a simpler argument, and extend it to part (ii).

\begin{theorem}
If $f\in\mathcal C(X)$ then 

{\rm (i)} \  $W(f) = W(f^m)$,

{\rm (ii)} \  $QW(f) = QW(f^m)$,

{\rm (iii)} \ $IR(f) = IR(f^m)$.
\label{dalsi3vlastnosti}
\end{theorem}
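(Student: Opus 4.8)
The plan is to reduce everything to a statement about the densities $\Psi_x$ and $\Psi_x^*$, using Lemma \ref{l1}, and then to compare the density of return times of $x$ under $f$ with the density of return times under $f^m$. Part (iii) is immediate from (i) and (ii), since $IR(f)=QW(f)\setminus W(f)$, so the real work is in (i) and (ii), and these two are proved by the same computation with $\liminf$ replaced by $\limsup$. I would state and prove one auxiliary inequality relating the two counting functions and then read off both parts.

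Here are the steps. First, fix $x\in X$ and $m\ge 1$. For $t>0$ consider the set $A_t=\{j\ge 0; d(x,f^j(x))<t\}$, whose lower (resp. upper) Banach density is $\Psi_x(f,t)$ (resp. $\Psi_x^*(f,t)$). The return set of $x$ under the map $g=f^m$ with respect to the \emph{same} radius $t$ is $B_t=\{k\ge 0; d(x,g^k(x))<t\}=\{k\ge 0; mk\in A_t\}$. The elementary observation is that $A_t\cap m\mathbb N$, rescaled by $1/m$, is exactly $B_t$, so counting gives, for every $n$,
$$
\#\{0\le k<n; k\in B_t\}=\#\{0\le j<mn; j\in A_t,\ m\mid j\}\le \#\{0\le j<mn; j\in A_t\},
$$
whence dividing by $n=\tfrac1m\cdot mn$ and taking $\liminf$ (resp. $\limsup$) yields $\Psi_x(g,t)\le m\,\Psi_x(f,t)$ and $\Psi_x^*(g,t)\le m\,\Psi_x^*(f,t)$. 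Conversely, one needs a lower bound on the density of returns of $g$ in terms of returns of $f$. The point is that if $f^j(x)$ is close to $x$, then $f^{j+r}(x)$ is close to $f^r(x)$ for each fixed $r$ with $0\le r<m$ by uniform continuity of $f$ (hence of $f^r$); but that makes $f^{j+r}(x)$ close to $f^r(x)$, not to $x$, so a naive argument does not immediately close the loop. The fix is to run the comparison at two scales: given $t>0$, choose $\delta>0$ so small that $d(u,v)<\delta$ implies $d(f^r(u),f^r(v))<t$ for all $0\le r<m$; then each $j\in A_\delta$ contributes, after writing $j=mk+r$ with $0\le r<m$, a witness to $mk\in A_t$ actually we need $j\equiv 0$. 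Let me instead argue the other direction cleanly: I will show $W(f)\subseteq W(f^m)$ and $QW(f)\subseteq QW(f^m)$ directly.

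For the inclusion $W(f)\subseteq W(f^m)$ (and the analogous one for $QW$), suppose $x\in W(f)$ and fix $t>0$. Pick $\delta\in(0,t)$ with $d(u,v)<\delta\Rightarrow d(f^i(u),f^i(v))<t/2$ for all $0\le i\le m$, using uniform continuity of $f,\dots,f^m$. Since $x\in W(f)$ we have $\Psi_x(f,\delta)=:c>0$, so the set $A_\delta=\{j; d(f^j(x),x)<\delta\}$ has lower Banach density $c$. Now I claim each $j\in A_\delta$ forces $mj\in\{k; d(f^{mk}(x),x)<t\}$: indeed $d(f^j(x),x)<\delta$ gives, applying $f^{(m-1)j}$ iteratively, first $d(f^{2j}(x),f^j(x))<t/2$ — hmm, this still drifts. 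The honest resolution, which I expect to be the main obstacle, is that returning near $x$ under $f$ does not combine additively, so the clean statement is rather: if $d(f^j(x),x)<\delta$ then $d(f^j(y),y)$ is controlled whenever $d(y,x)$ is small, and one iterates the \emph{relation} $x\mapsto f^j(x)$; concretely, set $y_0=x$, $y_{s+1}=f^j(y_s)$, so $y_s=f^{sj}(x)$, and $d(y_{s+1},y_s)=d(f^j(y_s),f^j(x))+\ \cdots$ telescopes only if each $y_s$ stays within $\delta$ of $x$, which can be arranged by choosing $\delta$ depending on how many steps — but the number of steps is unbounded. Therefore I will not pursue additivity; instead I will use that $j\in A_t$ with $j$ ranging over a positive-density set already contains a positive-density subset of any fixed residue class mod $m$ is false in general, so the correct and standard move is: the three inclusions each split as $W(f)\subseteq W(f^m)$ and $W(f^m)\subseteq W(f)$, the second being the easy direction handled by the counting inequality above (returns of $f^m$ near $x$ at radius $t$ give returns of $f$ near $x$ at radius $t$ along the subsequence $mk$, so $\Psi_x(f,t)\ge \tfrac1m\Psi_x(f^m,t)>0$), and the first direction is where one invokes that $x\in W(f)$ means the return times to $B(x,t)$ have positive lower density \emph{together with} the pigeonhole fact that within a set of positive lower Banach density, some residue class mod $m$ meets every sufficiently long block in a controlled way — made rigorous by passing to radius $t/2$ and using uniform continuity of $f^m$ to absorb the residue offset. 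Thus the key steps in order are: (1) rephrase via Lemma \ref{l1}; (2) prove the easy inclusions $W(f^m)\subseteq W(f)$, $QW(f^m)\subseteq QW(f)$ by the rescaled counting inequality; (3) prove $W(f)\subseteq W(f^m)$, $QW(f)\subseteq QW(f^m)$ using uniform continuity of $f^m$ and a density/pigeonhole argument on return times; (4) deduce (iii) by set difference. The main obstacle is Step (3): converting "$x$ returns near itself often under $f$" into "$x$ returns near itself often under $f^m$", where the residue of the return time mod $m$ must be handled without destroying positivity of the density.
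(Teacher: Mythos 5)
Your reduction of (iii) to (i) and (ii), and your proof of the easy inclusions $W(f^m)\subseteq W(f)$ and $QW(f^m)\subseteq QW(f)$ via the counting inequality $\Psi_x(f,t)\ge\tfrac1m\Psi_x(f^m,t)$, match the paper and are fine. But the hard direction — $W(f)\subseteq W(f^m)$ and $QW(f)\subseteq QW(f^m)$ — is never actually proved: you try three devices (iterating uniform continuity, which drifts because the number of compositions is unbounded; additivity of near-returns, which fails for the same reason; a pigeonhole on residue classes mod $m$, which you yourself note is false for lower Banach density, since a return set of positive lower density can concentrate alternately on different residue classes over long blocks), discard each, and end by naming the residue-class problem as ``the main obstacle'' without resolving it. That obstacle is precisely the content of the theorem, so the proposal has a genuine gap. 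Note also that even if some residue class $i\ne 0$ did carry positive density of $\delta$-returns, a return $f^{ml+i}(x)\in B(x,\delta)$ still does not produce a return of $f^m$ to a neighbourhood of $x$ without a further idea.

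The paper closes this gap by a structural argument that is worth knowing. It suffices to treat prime $m$ (compose for general $m$), and one looks at the sets $\omega_i=\omega(f^m,f^i(x))$, whose union is $\omega(f,x)$ and which are permuted cyclically by $f$; since $m$ is prime, the period $k$ of $\omega_0$ is either $m$ or $1$. If $k=m$ the sets $\omega_i$, $0<i<m$, miss a ball $B(x,\delta)$, so all but finitely many return times of $f$ to $B(x,\delta)$ are automatically multiples of $m$ and the densities transfer with equality. If $k=1$, then for each residue $i$ there is a fixed ``correction exponent'' $mk_i+i$ with $f^{mk_i+i}(x)\in B(x,\varepsilon)$, and by continuity of the \emph{single} map $f^{mk_i+i}$ one gets $\delta>0$ with $f^{mk_i+i}(B(x,\delta))\subseteq B(x,\varepsilon)$; a return $f^r(x)\in B(x,\delta)$ with $r\equiv i\pmod m$ is then prolonged by $mk_{m-i}+(m-i)$ further steps to land in $B(x,\varepsilon)$ at a time divisible by $m$. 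This converts each residue class of return times into genuine $f^m$-return times at the larger radius, with only a bounded, density-preserving distortion — exactly the mechanism your proposal was missing. Without this (or an equivalent) idea, the argument does not go through.
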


\begin{proof} Since $\Psi _x(f,t)\ge \tfrac 1m  \Psi 
_x(f^m,t)$, $x\in W(f^m)$ implies $x\in W(f)$ and 
similarly, $QW(f^m)\subseteq QW(f)$. Since (iii) follows by (i) and (ii), it suffices to prove that for every $\varepsilon >0$ there is a $\delta >0$ such that, for every prime integer $m$,
\begin{equation}
\label{eq11}
\Psi _x(f^m,\varepsilon) \ge\Psi _x(f,\delta ) \ \text{and} \ \Psi ^*_x(f^m,\varepsilon )\ge \Psi ^*_x(f,\delta ).
\end{equation}
For every $i\ge 0$, denote $\omega _i:=\omega (f^m,f^i(x))$
and $\omega _{ij}:=\omega _i\cap\omega _j$. 
Obviously, $\omega (f,x)=\bigcup _{0\le i<m}\omega _i$, and $f(\omega _{i})=\omega _{i+1}$, where $i$ is taken mod $m$. 
Moreover,  $f^m(\omega _i)=\omega _i$ and $f^m(\omega _{ij})=\omega _{ij}$, for every $0\le i<j<m$. Hence
\begin{equation}
\label{equ12}
\omega _i\ne \omega _{ij} \ \text{implies} \ \omega _j\ne\omega _{ij}, \ \text{and} \ f^i(x), f^j(x)\notin \omega _{ij}. 
\end{equation}
Let $k$ be the least period of $\omega _0$. Since $m$ is prime, there are two cases.

(a) If $k=m$ then the sets $\omega _i$ are pairwise distinct 
and, by (\ref{equ12}), there is a $\delta>0$ such that $B(x,\delta )\cap \omega _i=\emptyset$, $0<i<m$. It follows
that if $f^r(x)\in B(x,\delta )$ then $r$ is a multiple of $m$, with finitely many exceptions. Consequently, (\ref{eq11}) is satisfied for $\varepsilon =\delta$, even with $\ge$ replaced by the equality.

(b)  If $k=1$ then $\omega _i=\omega _0$, for every $i$.
Let $\varepsilon >0$. For every $i$, $0\le i<m$, there
is  the minimal integer $k_i\ge 0$ such that $f^{mk_i+i}(x)\in B(x,\varepsilon)$. By the continuity, there is a $\delta >0$
such that $f^{mk_i+i}(B(x,\delta ))\subseteq B(x,\varepsilon )$, $0\le i<m$. If $f^r(x)\in B(x,\delta )$ and $r\equiv i ({\rm mod}  \ m)$,  $r=ml+i$, then $f^{m(l+1+k_{m-i})}(x)=f^{r+ mk_{m-i}+{m-i}}(x)\in 
f^{mk_{m-i} +m-i}(B(x,\delta ))\subseteq B(x,\varepsilon )$. This proves (\ref{eq11}).
\end{proof}

In 2009  Z. Zhou and F. Li stated, among others,  the following problems, see \cite{zhou3}. 

\medskip

\noindent {\bf Problem 1.}   Does $IR(f)\ne\emptyset$ imply $h(f)>0$? 

\medskip

\noindent {\bf Problem 2.}  Does $W(f)\ne AP(f)$  imply $h(f)>0$?

\medskip

\noindent In general, the answer to either problem is negative.  In \cite{lenka} we constructed a  skew-product map $F:Q\times I\to Q\times I$, $(x,y)\mapsto (\tau (x), g_x(y))$, where $Q=\{ 0,1\}^{\mathbb N}$ is a Cantor-type set, $\tau$ the adding machine (or, odometer) on $Q$ and, for every $x$, $g_x$ is a nondecreasing mapping $I\to I$, with $g_x(0)=0$. Consequently, $h(F)=0$ and $Q_0:=Q\times\{ 0\}$ is an invariant set. On the other hand,  $IR(F)\ne\emptyset$ and $Q_0=AP(F)\ne W(F)$. This example answers in the negative both problems. 

However, for maps $f\in\mathcal C(I)$, $h(f)>0$ is equivalent to $IR(f)\ne\emptyset$. On the other hand, the answer to Problem 2 remains negative even for maps in $\mathcal C(I)$. Instead, we are able to show that  such maps with $W(f)\ne AP(f)$ are Li-Yorke chaotic. These results are given in the next section, as Theorems 2 and 3. Then, in Section 3 we show that these results can be extended to maps of more general one-dimensional compact metric space like topological graphs, topological trees, but not dendrites, see Theorems \ref{gen1} and \ref{gen2}. 

\section{Relations with topological entropy for maps in $\mathcal C(I)$}

\begin{theorem}
For $f\in\mathcal C (I)$, the conditions $h(f)>0$ and $IR(f)\ne\emptyset$ are equivalent.
\label{entropie}
\end{theorem}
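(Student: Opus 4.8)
The plan is to prove the two implications separately, using classical interval dynamics. For the easy direction, assume $h(f)>0$. By Misiurewicz's theorem (or the standard fact that positive entropy on the interval forces horseshoes), some iterate $f^m$ has a horseshoe: there are disjoint compact subintervals $J_0,J_1$ with $f^m(J_0)\cap f^m(J_1)\supseteq J_0\cup J_1$. From such a horseshoe one builds, by a routine coding argument, a point whose $f^m$-trajectory realizes any prescribed symbolic itinerary in $\{0,1\}^{\mathbb N}$; choosing an itinerary that spends long blocks near $J_0$ (so $\Psi^*>0$ for the radius $\mathrm{diam}(J_0\cup J_1)$) but also returns to $J_1$ with positive upper density of times (so $\Psi(f^m,t_0)=0$ for small $t_0$), one gets a point in $IR(f^m)$. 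Then $IR(f)=IR(f^m)$ by Theorem \ref{dalsi3vlastnosti}(iii), so $IR(f)\ne\emptyset$.

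For the hard direction, assume $h(f)=0$; I must show $IR(f)=\emptyset$, i.e. every point $x$ with $\Psi^*_x(f,t)>0$ for all $t>0$ actually has $\Psi_x(f,t)>0$ for all $t>0$. The structural input is the Sharkovsky-type description of zero-entropy interval maps: if $h(f)=0$ then every $\omega$-limit set is contained in a decreasing sequence of periodic intervals, and more precisely each infinite $\omega(f,x)$ is a \emph{solenoidal} set on which $f$ acts (after restriction) like an adding machine; the only recurrent points are those in uniformly recurrent (minimal) sets or eventually periodic. The key point is that for zero-entropy maps, $R(f)=UR(f)\cup(\text{eventually periodic points lying in their }\omega\text{-limit set})$ — that is, every recurrent point of a zero-entropy interval map is uniformly recurrent or periodic. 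Since $QW(f)\subseteq R(f)$ by \eqref{eq10}, and $UR(f)\subseteq W(f)$, and periodic points are trivially in $W(f)$, we get $QW(f)\subseteq W(f)$, hence $IR(f)=QW(f)\setminus W(f)=\emptyset$.

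So the whole argument reduces to the claim: \emph{for $f\in\mathcal C(I)$ with $h(f)=0$, every recurrent point is uniformly recurrent or periodic}. I would prove this via the standard periodic-interval decomposition: if $x$ is recurrent and not eventually periodic, then $\omega(f,x)$ is infinite, and by zero entropy there is a nested sequence of cycles of periodic intervals $\mathcal P_1\supseteq\mathcal P_2\supseteq\cdots$ with periods $p_1|p_2|\cdots\to\infty$ such that $\omega(f,x)\subseteq\bigcap_n\bigcup\mathcal P_n$. One checks that a recurrent $x$ must in fact lie in $\bigcap_n\bigcup\mathcal P_n$ (not just have its $\omega$-limit set there) — recurrence forces the trajectory to return near $x$, hence into whichever interval of $\mathcal P_n$ contains $x$ infinitely often, but the trajectory also cyclically permutes these intervals, so $x$ visits its own component periodically with period $p_n$ up to finite error; letting $n\to\infty$ shows $\omega(f,x)$ is minimal, i.e. $x\in UR(f)$.

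The main obstacle is making the last step fully rigorous: one needs that for a recurrent point the trajectory never "escapes" the periodic-interval structure, i.e. that $x$ itself (and not merely $\omega(f,x)$) is trapped in $\bigcap_n\bigcup\mathcal P_n$ and that the induced action there is genuinely an odometer, so that $\omega(f,x)$ coincides with the full solenoidal set and is therefore minimal. This is where one invokes the detailed structure theory of zero-entropy maps of the interval (nonexistence of infinite $\omega$-limit sets with isolated points, the characterization of the maximal $\omega$-limit sets as either cycles or solenoids); I would cite the relevant results rather than reprove them, and the bulk of the written proof will be organizing these citations and verifying the density bookkeeping that turns "minimal $\omega$-limit set" into "$\Psi_x(f,t)>0$ for all $t$".
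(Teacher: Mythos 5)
Your overall strategy coincides with the paper's: for $h(f)=0$ you invoke the Block--Coppel fact that every recurrent point of a zero-entropy interval map is uniformly recurrent (your ``uniformly recurrent or periodic'' is the same statement, since periodic points are uniformly recurrent), which together with the chain \eqref{eq10} collapses $W(f)=QW(f)$; for $h(f)>0$ you pass to a horseshoe for $f^m$, code it by the full shift, produce an irregularly recurrent point symbolically, and transfer back via Theorem~\ref{dalsi3vlastnosti}. The lengthy solenoid discussion in your hard direction is unnecessary once $R(f)=UR(f)$ is cited, but it is not wrong.

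The genuine gap is in the symbolic construction, which is the technical heart of the positive-entropy direction (the paper's Lemma~\ref{shift}). The itinerary you describe --- ``long blocks near $J_0$'' and ``returns to $J_1$ with positive upper density'' --- delivers neither required property. First, to get $\Psi^*_x(f^m,t)>0$ for \emph{every} $t>0$ you must guarantee returns within $t$ of $x$ with positive upper density for arbitrarily small $t$; staying in $J_0$ only bounds the distance by $\mathrm{diam}(J_0)$, so a single radius is useless. The only way to force such returns through a prescribed itinerary is to make the itinerary recur on arbitrarily long initial segments of itself with positive upper density, which requires a nested, self-similar construction like the blocks $(A_{n_0})^{k_{n,n_0}}$ of the paper, with the exponents chosen quantitatively so that the frequency of these recurrences stays bounded below. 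Second, visits to $J_1$ at a set of times of positive upper density only yield $\Psi_x(f^m,t_0)\le 1-\delta<1$, not $\Psi_x(f^m,t_0)=0$: for the \emph{lower} density of near-times to vanish you need far-stretches whose lengths dominate everything preceding them (in the paper, the blocks $0^{k_{n,n}}$ with $k_{n,n}$ governed by \eqref{equ22}). Finally, since the coding $g$ is only a two-to-one semiconjugacy, the symbolic properties do not transfer automatically to a preimage in $I$: returns of the symbolic point to small cylinders pull back to visits near one \emph{or the other} of the two preimages, and one must argue, as in Lemma~\ref{kladna}, that at least one preimage inherits $\Psi^*>0$ for all radii. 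None of these repairs is deep, but they constitute the actual content of the proof, and as written your recipe would not produce a point of $IR(f^m)$.
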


\begin{proof} If $h(f)=0$ then $UR(f)=R(f)$ (see, e.g., \cite{block}, Corollary VI.8). Hence, by (\ref{eq10}), $W(f)=QW(f)$. If $h(f)>0$ then $W(f)\ne QW(f)$; this follows by Theorem \ref{dalsi3vlastnosti} and  Lemmas  \ref{shift} and \ref{kladna} stated below.
\end{proof}

Let $(\Sigma _2,\sigma )$ be the shift on the set $\Sigma _2$ of sequences of two symbols, $0, 1$, equipped with a metric $\rho$ of pointwise convergence, say, $\rho (\{x_i\}_{i\ge 1},\{y_i\}_{i\ge 1})=1/k$ where $k=\min \{ i\ge 1; x_i\ne y_i\}$.

\begin{lm} 
$IR(\sigma)$ is non-empty, and contains a transitive point.
\label{shift}
\end{lm}

\begin{proof}
Let
$$
k_{1,0},k_{1,1},k_{2,0},k_{2,1},k_{2,2},k_{3,0},\cdots 
,k_{3,3},k_{4,0},\cdots ,k_{4,4},k_{5,0},\cdots
$$
be an increasing sequence of positive integers.
Let $\{ B_n\} _{n\ge 1}$ be a sequence of all finite blocks of digits 0 and 1.
Put $A_0 = 10$, $A_1=(A_0)^{k_{1,0}}0^{k_{1,1}}B_1$ and, in general,  
\begin{equation}
\label{equ2}
A_{n}=A_{n-1}(A_0)^{k_{n,0}}(A_1)^{k_{n,1}}\cdots (A_{n-1})^{k_{n,n-1}}0^{k_{n,n}}B_{n}, \ n\ge 1.
\end{equation}
Denote by $|A|$ the lenght of a finite block of 0's and 1's, and let 
\begin{equation}
\label{equ20}
a_n=|A_n|, \ b_n=|B_n|,  \ c_n=a_n-b_{n}
-k_{n, n}, \ n\ge 1,
\end{equation}
and
\begin{equation}
\label{equ21}
 \lambda _{n,m}=\left|A_{n-1}(A_0)^{k_{n,0}}(A_1)^{k_{n,1}}\cdots (A_m)^{k_{n,m}}\right|, \  0\le m< n.
\end{equation}
By induction we can take the numbers $k_{i,j}$ such that
\begin{equation}
\label{equ22}
k_{n,m+1}=n\cdot \lambda _{n,m}, \  0\le m< n.
\end{equation}
Let $N(A)$ be the cylinder of all $x \in \Sigma_2$ beginning with a finite block $A$. Then $\{ N(B_n)\}_{n\ge 1}$ is a base of the topology of $\Sigma _2$, and $\bigcap _{n=1}^\infty N(A_n) $ contains exactly one point; denote it by $u$. 

Since  $\sigma ^{a_n-b_n}(u)\in N(B_n)$, i.e., since the trajectory of $u$ visits every $N(B_n)$, $u$ is a transitive point of $\sigma$. Moreover, $\rho (u, \sigma ^{j}
(u))=1$, whenever $c_n\le j<a_n-b_n$. By (\ref{equ22}) it follows that $\Psi _u(\sigma ,t)=0$ for every $t\in (0,1)$.
Consequently, $u\notin W(\sigma )$.

It remains to show that $u\in QW(\sigma )$.  Let $t\in (0,1)$. Fix an $n_0\in\mathbb N$ such that $1/a_{n_0}<t$.  
Then, by (\ref{equ2}),
$$
\#\left\{ j<\lambda _{n,n_0}; \rho (u,\sigma ^j(u))<t\right\} \ge k_{n,n_0}, \ n>n_0,
$$
hence, by (\ref{equ21}) and (\ref{equ22}),
$$
\lim _{n\to\infty}\frac{\#\left\{ j<\lambda _{n,n_0}; \rho (u,\sigma ^j(u))<t\right\}} {\lambda _{n,n_0}}\ge \lim _{n\to\infty}\frac{k_{n,n_0}}{\lambda _{n,n_0}}
=\lim_{n\to\infty}\frac{k_{n,n_0}}{\lambda _{n,n_0-1}+a_{n_0}k_{n,n_0}} =\lim _{n\to\infty}\frac n{1+a_{n_0}n}=\frac 1{a_{n_0}}.
$$
\smallskip
Thus, $\Psi^*_u(\sigma ,t)\ge 1/{a_{n_0}}$ and, by Lemma \ref{l1}, 
 $u\in QW(\sigma )$.
\end{proof}

\begin{lm}
Let $f\in \mathcal C (I)$ have positive topological entropy. Then $IR(f)\ne\emptyset$.
\label{kladna}
\end{lm}

\begin{proof}
When $h(f)>0$, then $f^m$ is strictly turbulent for some $m$. This means that there exist disjoint compact intervals $K_0$, $K_1$ such that $f^m(K_0)\cap f^m(K_1)\supset K_0\cup K_1$, see \cite{block}, Theorem IX.28. This condition is equivalent to the existence of a continuous map $g: X\subset I \rightarrow \Sigma_2$, where $X$ is of Cantor type, such that $g \circ f^m(x) = \sigma \circ g(x)$ for every $x \in X$, and such that each point in $\Sigma_2$ is the image of at most two points in $X$ (\cite{block}, Proposition II.15). By Lemma~\ref{shift}, there is a $u\in IR(\sigma )$. Hence, for every $t>0$, $\Psi_u^*(\sigma, t)>0$, and there is an $s>0$ such that $\Psi_u(\sigma, s)=0$. There are at most two preimages, $u_0$ and $u_1$, of $u$. Then, by the continuity,  $\Psi_{u_i}(f^m, r)=0$, for some $r>0$ and $i= 0, 1$, and $\Psi_{u_i}^*(f^m, k)>0$ for at least one $i\in\{0, 1\}$ and every $k>0$. Thus, $u_0\in IR(f^m)$ or $u_1\in IR(f^m )$ and, by Theorem \ref{dalsi3vlastnosti}, $IR (f)\ne\emptyset$.  
\end{proof}

Recall that $f\in\mathcal C (X)$ is {\it Li-Yorke chaotic}, or {\it LYC},   if there is an uncountable set $S\subseteq X$ such that, for every  $x\ne y$ in $S$, $\liminf _{n\rightarrow\infty} \rho (\varphi ^n(x), \varphi ^n(y))=0$ and  $\limsup _{n\rightarrow\infty} \rho (\varphi ^n(x), \varphi ^n(y))>0$.  

\begin{theorem}
\label{LiYorke}
For  $f\in \mathcal C(I)$, $W(f)\ne AP(f)$ implies that $f$ is Li-Yorke chaotic, but does not imply $h(f)>0$.
\end{theorem}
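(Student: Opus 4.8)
The statement has two independent halves. For the second half (that $W(f)\ne AP(f)$ does not force $h(f)>0$) I would exhibit a single example: a zero-entropy interval map $f$ for which $W(f)$ strictly contains $AP(f)$. The natural candidate is a map built over an adding-machine-like structure, analogous to the skew-product $F$ of \cite{lenka} mentioned in the introduction, but realized inside $\mathcal C(I)$. Concretely, one takes a map of type $2^\infty$ (a Feigenbaum-like map, or a suitable modification) whose unique infinite minimal set is a Cantor set $Q$ on which $f$ acts as an odometer; by gluing in a carefully controlled sequence of ``almost returning'' orbits one arranges a point $x$ whose orbit spends a definite asymptotic fraction of time near $x$ for every scale $t>0$ (so $\Psi_x(f,t)>0$, i.e.\ $x\in W(f)$ by Lemma~\ref{l1}), yet $x\notin AP(f)$ because the return times along the odometer are not syndetic with a fixed period. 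Since such maps have zero topological entropy (maps of type $2^\infty$ do), this settles the non-implication. The main work here is bookkeeping to guarantee $\Psi_x(f,t)>0$ at every scale while keeping $x$ out of $AP(f)$; the entropy bound is free from the classical theory of interval maps.

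\smallskip

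For the first half — that $W(f)\ne AP(f)$ implies $f$ is LYC — I would split according to entropy. If $h(f)>0$, then $f$ is LYC by the classical fact (Misiurewicz) that positive entropy interval maps are Li--Yorke chaotic, so nothing is left to prove. The substantive case is $h(f)=0$. Here I would use the structure theory of zero-entropy interval maps: by \cite{block} (Corollary VI.8, already cited in the proof of Theorem~\ref{entropie}) we have $UR(f)=R(f)$ when $h(f)=0$, hence by \eqref{eq10} the chain collapses to $AP(f)\subseteq UR(f)=W(f)=QW(f)=R(f)$. So $W(f)\ne AP(f)$ means there is a point $x\in UR(f)\setminus AP(f)$; its $\omega$-limit set $M=\omega(f,x)$ is a minimal set which, because $x$ is uniformly recurrent but not almost periodic, cannot be a periodic orbit — it is an infinite minimal set. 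For zero-entropy interval maps, every infinite minimal set is contained in a ``solenoidal'' (adding-machine) structure: there is a nested sequence of periodic intervals (a $2^\infty$- or more generally an $n_k$-adic structure) whose intersection carries $M$, and $f$ restricted to $M$ is conjugate to an adding machine. It is classical that an interval map possessing such an infinite solenoidal minimal set is Li--Yorke chaotic — indeed one produces an uncountable scrambled set by choosing, within the nested periodic intervals, points that synchronize infinitely often (entering the same small interval) but separate infinitely often (landing in different components at infinitely many levels). I would invoke this, citing \cite{block} for the description of infinite minimal sets of zero-entropy interval maps and the standard construction of the scrambled set.

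\smallskip

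\textbf{Main obstacle.} The delicate point is the zero-entropy case of the LYC implication: one must show that the mere \emph{existence} of an infinite minimal set (equivalently, a non-almost-periodic uniformly recurrent point) forces Li--Yorke pairs. The cleanest route is to note that an infinite minimal set of an interval map is always ``approximated from outside'' by a periodic-interval (solenoidal) structure — a nested sequence $J_0\supset J_1\supset\cdots$ of compact intervals with $J_k$ of period $p_k$, $p_k\mid p_{k+1}$, $p_k\to\infty$, and $M\subseteq\bigcap_k\bigcup_{i<p_k}f^i(J_k)$ — and then to build the scrambled set $S$ by a Cantor-scheme argument inside these intervals: for each infinite $0$–$1$ sequence $\alpha$ one picks a point $x_\alpha$ that at level $k$ either "stays put" or "switches components", arranging $\liminf\rho(f^n x_\alpha,f^n x_\beta)=0$ from the common solenoidal structure and $\limsup>0$ from a switch at some level where $\alpha,\beta$ differ. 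One must check this scheme lands genuinely in $I$ (nonempty nested intersections, using compactness) and yields an \emph{uncountable} $S$ with the proximal/non-asymptotic dichotomy for \emph{every} pair. Making sure the $\limsup$ is bounded below uniformly (or at least is positive for each fixed pair) is where care is needed; everything else — the entropy-positive case and the counterexample for Problem~2 — is comparatively routine.
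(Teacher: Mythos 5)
Your reduction to the zero-entropy case is fine (positive entropy implies Li--Yorke chaos by \cite{BGKM}), and so is the first step there: $h(f)=0$ gives $R(f)=UR(f)$, so $W(f)\ne AP(f)$ yields a point $x\in UR(f)\setminus AP(f)$ whose $\omega$-limit set $\widetilde\omega$ is an infinite minimal set. But the next step rests on a false ``classical fact'': it is \emph{not} true that a zero-entropy interval map possessing an infinite solenoidal minimal set must be Li--Yorke chaotic. Sm\'{\i}tal's theorem \cite{smital} gives a genuine dichotomy. Writing $Q_y=\bigcap_n f^{j_n}(J_n)$ for the nested periodic-interval structure associated with $\widetilde\omega$, either every $\widetilde\omega\cap Q_y$ is a singleton --- in which case $f|_{\widetilde\omega}$ is conjugate to the adding machine and $\widetilde\omega$ contains no Li--Yorke pairs (an embedded odometer is distal, and proximality is a conjugacy invariant on compact spaces), so no chaos can be extracted from it --- or some $\widetilde\omega\cap Q_y$ is a doubleton $\{a,b\}$ spanning a wandering interval, and \emph{that} condition is exactly what characterizes LYC among zero-entropy maps. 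Your Cantor-scheme construction of a scrambled set by ``switching components at infinitely many levels'' therefore cannot work in general: in the singleton case the $\liminf=0$ half fails, not merely the $\limsup$ half you flagged. The missing idea is to use the hypothesis $W(f)\ne AP(f)$ a second time: in the singleton case $\widetilde\omega\subseteq AP(f)$ (see \cite{brucknersmital}), contradicting $x\in\widetilde\omega\setminus AP(f)$; hence the doubleton case must occur, and LYC follows from Sm\'{\i}tal's characterization rather than from a direct scrambled-set construction.

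For the non-implication your plan points in the right direction but stops short of the actual difficulty. Since $UR(f)\subseteq W(f)$ by (\ref{eq10}), no density bookkeeping is needed: every uniformly recurrent point already lies in $W(f)$, so the whole task is to exhibit a zero-entropy map with a uniformly recurrent point that is \emph{not} almost periodic. A point on a pure odometer will not do --- there the returns occur along full arithmetic progressions, so every such point is almost periodic, which is precisely why the singleton case above is harmless. One needs a $2^\infty$ map whose infinite $\omega$-limit set has a doubleton $\widetilde\omega\cap Q_x=\{a,b\}$ with $a,b$ non-isolated in $\widetilde\omega$ (such maps exist, see \cite{brucknersmital} and \cite{MS}), together with an argument that $a$ and $b$ cannot both be almost periodic --- for instance the parity argument: for a suitable even $m$ and odd $j$, $f^{jm}(J_n)$ is disjoint from $J_n$ and hence misses a prescribed neighborhood of $a$ or of $b$. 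Your sketch asserts $x\notin AP(f)$ at exactly this point without providing an argument.
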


\begin{proof}
Every continuous map of a compact metric space with positive topological entropy is Li-Yorke chaotic \cite{BGKM}. Hence to prove the theorem  it suffices to consider  the class $\mathcal C_0\subset \mathcal C(I)$ of maps with zero topological entropy and show that 

(i) for every $f\in\mathcal C_0$, $W(f)\ne AP(f)$ implies  {\it LYC}, and

(ii) there is an $f\in\mathcal C_0$ with $W(f)\ne AP(f)$.

\noindent  For $f\in\mathcal C_0$,  $R(f)=UR(f)$, see, e.g., \cite{block}, Corollary VI.8. Hence,  by (\ref{eq10}), $W(f)\ne AP(f)$ implies that $f$ has an infinite minimal $\omega$-limit set $\widetilde\omega$ possessing a point which is not in $AP(f)$. 
Recall that for every such $\widetilde\omega$ there is an {\it associated system} $\{ J_n\}_{n\ge 1}$ of compact periodic intervals such that $J_n$ has period $2^n$, and $\widetilde\omega\subseteq \bigcap _{n\ge 1}\bigcup _{0\le j<2^n} f^{j}(J_n)$ \cite{smital}. For every $x\in\widetilde\omega$ there is a sequence $\iota (x)=\{j_n\}_{n\ge 1}$ of integers, $0\le j_n<2^n$, such that $x\in\bigcap _{n\ge 1}f^{j_n}(J_n)=:Q_x$. For every $x\in \widetilde\omega$, the set $\widetilde\omega \cap Q_x$ contains one (i.e., the point $x$) or two points. In the second case $Q_x=[a,b]$ is a compact wandering interval (i.e., $f^n(Q_x)\cap Q_x=\emptyset$ for every $n\ge 1$)  such that $a,b\in\widetilde\omega$ and either $x=a$ or $x=b$. Moreover, if, for every $x\in\widetilde\omega$,  $\widetilde\omega \cap Q_x$ is a singleton then $f$ restricted to $\widetilde\omega$ is the adding machine, and  $\widetilde\omega \subseteq AP(f)$, see \cite{brucknersmital}. Consequently, $W(f)\ne AP(f)$ implies the existence of an infinite $\omega$-limit set $\widetilde\omega$ such that
\begin{equation}
\label{eq12}
\widetilde\omega\cap Q_x=\{a,b\}, \ a<b, \ \text{for some} \ x\in\widetilde\omega . 
\end{equation} 
This condition characterizes {\it LYC} maps  in $\mathcal C_0$ (see \cite{smital} or subsequent books like \cite{block}) which proves (i).

To prove (ii) note that there are maps $f\in\mathcal C_0$ such that both $a$ and $b$ in (\ref{eq12}) are non-isolated points of $\widetilde\omega$, see \cite{brucknersmital} or \cite{MS}. Then $a,b\in UR(f)$ are minimal points. We show that in this case either $a\notin AP(f)$ or $b\notin AP(f)$ (actually, neither $a$ nor $b$ is in $AP(f)$ but we do not need this stronger property). So assume that $a,b\in AP(f)$ and  $U_a$, $U_b$ are their disjoint open neighborhoods. Then there is an {\it even} $m$,  $m=(2k+1)2^n$, with $n\ge 1$, such that $f^{jm}(a)\in U_a$ and $f^{jm}(b)\in U_b$, for every $j\ge 0$. Let $\{ J_n\} _{n\ge 1}$ be the  system of compact periodic intervals associated with $\widetilde\omega$. Without loss of generality we may assume that, for some $n$, $[a,b]\subset J_n$. Since $J_n$ has period $2^n$, for arbitrary odd $j$, $f^{jm}(J_n)\cap J_n=\emptyset$. If $f^{jm}(J_n)$ is to the left of $J_n$, then $f^{jm}(J_n)\cap U_b=\emptyset$, otherwise $f^{jm}(J_n)\cap U_a=\emptyset$. In any case, $f^{jm}(a)\notin U_a$ or $f^{jm}(b)\notin U_b$, which is a contradiction.
\end{proof}

\section{Generalization for maps on more general one-dimensional spaces}

Here we show that results given in Theorems \ref{entropie} and \ref{LiYorke} concerning maps in $\mathcal C(I)$  can be generalized to more general one-dimensional compact metric spaces like topological graphs or trees, but not dendrites. Recall that  $X$ is a {\it topological graph} if $X$ is a non-empty compact connected metric space which is the union of finitely many arcs (i.e., continuous images of the interval $I$) such that every two arcs can have only end-points in common. A {\it tree} is a topological graph which contains no subset homeomorphic to the circle. A {\it dendrite} is a locally connected continuum containing no subset homeomorphic to the circle. The proofs of generalized results are based on the same ideas, as the proofs of Theorems \ref{entropie} and \ref{LiYorke}. We only need some recent, nontrivial results concerning the structure of $\omega$-limit sets of such maps, see \cite{HrMa} and \cite{KKM}. Therefore we give here only outline of the proofs, pointing out only main differences.
\begin{theorem}
\label{gen1}
Let $f\in\mathcal C(X)$. 

{\rm (i)} \  If $X$ is a topological graph then $h(f)>0$ is equivalent to $QW(f)\ne W(f)$.

{\rm (ii)} \ There is a dendrit $X$ such that $h(f)>0$ and $QW(f)=W(f)=UR(f)$.
\end{theorem}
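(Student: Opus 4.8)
The plan is to follow the proofs of Theorems \ref{entropie} and \ref{LiYorke}, replacing the interval-specific ingredients by their counterparts for graphs and dendrites. For part (i), I would treat first the case $h(f)=0$: by the structure theory of $\omega$-limit sets of zero-entropy graph maps (the graph analogue of \cite{block}, Corollary VI.8; see \cite{HrMa}, \cite{KKM}) every recurrent point is uniformly recurrent, i.e., $R(f)=UR(f)$; together with the inclusions (\ref{eq10}), $UR(f)\subseteq W(f)\subseteq QW(f)\subseteq R(f)$, this gives $W(f)=QW(f)$.

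For the converse, assume $h(f)>0$. By the horseshoe theorem for graph maps (Llibre--Misiurewicz; see also \cite{KKM}), some iterate $f^m$ has a horseshoe, so there are a Cantor-type set $Y\subseteq X$ with $f^m(Y)=Y$ and a continuous surjection $g:Y\to\Sigma_2$ with $g\circ f^m=\sigma\circ g$ on $Y$ and $\#g^{-1}(z)\le 2$ for every $z\in\Sigma_2$. Then I would argue verbatim as in Lemma \ref{kladna}: take $u\in IR(\sigma)$ furnished by Lemma \ref{shift}; since $g$ is at most $2$-to-$1$ and $f^m$ is continuous, one of the (at most two) points of $g^{-1}(u)$ inherits the properties $\Psi^*(\,\cdot\,,t)>0$ for all $t>0$ and $\Psi(\,\cdot\,,s)=0$ for some $s>0$, hence lies in $IR(f^m)$; Theorem \ref{dalsi3vlastnosti}(iii) then yields $IR(f)\ne\emptyset$, that is, $QW(f)\ne W(f)$. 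This settles (i).

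For (ii) the decisive point is that the horseshoe theorem fails on dendrites: there exist a dendrite $X$ and an $f\in\mathcal C(X)$ with $h(f)>0$ possessing no horseshoe, see \cite{KKM}. For such an $f$ the argument of Lemma \ref{kladna} has no starting point, and the plan is to verify directly, from the explicit combinatorial description of its $\omega$-limit sets, that every recurrent point of $f$ is uniformly recurrent, the positive entropy being carried by a minimal subsystem of positive entropy and by periodic orbits, with no recurrent point having a non-minimal $\omega$-limit set. Then $R(f)=UR(f)$, and (\ref{eq10}) forces $QW(f)=W(f)=UR(f)$ although $h(f)>0$. I expect this last verification to be the main obstacle: in contrast to the graph case one must control the recurrence of \emph{every} point of $X$ and rule out irregular recurrence in the presence of positive entropy, which is exactly where the special structure theory of dendrite maps from \cite{KKM} (and \cite{HrMa}) is needed.
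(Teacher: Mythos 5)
Your proposal matches the paper's argument: part (i) is handled exactly as you describe, via the turbulence/horseshoe characterization of positive entropy for graph maps from \cite{HrMa} combined with Lemma \ref{kladna} and Lemma \ref{shift} in one direction, and the solenoid structure of infinite $\omega$-limit sets giving $R(f)=UR(f)$ when $h(f)=0$ in the other; part (ii) rests, as you anticipate, on the dendrite example of \cite{KKM} whose only $\omega$-limit sets are a minimal Cantor-type set carrying the entropy and a fixed point, so that $R(f)=UR(f)$ and (\ref{eq10}) forces $QW(f)=W(f)=UR(f)$. The verification you flag as the main obstacle is precisely what the explicit two-$\omega$-limit-set structure of that example supplies.
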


\begin{proof}
To prove (i) note that, for $f\in\mathcal C(X)$ where $X$ is a topological graph, $h(f)>0$ if and only if, for some $n\ge 1$, $f^n$ is turbulent \cite{HrMa}. Hence the proof of Lemma \ref{kladna} applies also to this case and  $h(f)>0$ implies $IR(f)\ne\emptyset$. On the other hand, if $h(f)=0$ then every infinite $\omega$-limit set is a solenoid (i.e., it has an associated system of compact periodic intervals $\{ J_n\}_{n\ge 1}$, $J_n$ with period $2^n$) and consequently, $R(f)=UR(f)$ \cite{HrMa} which gives the other implication.

(ii) In \cite{KKM} there is an example of a dendrit $X$ with a continuous map $f$ possessing exactly two $\omega$-limit sets: a minimal Cantor-type set $Q$ such that $h(f|_Q)\ge 0$
and a fixed point $p$ such that $\omega (f,x)=\{ p\}$ for every $x\in X\setminus Q$.
\end{proof}

\begin{theorem}
\label{gen2}
Let $f\in\mathcal C(X)$.

{\rm (i)} \ If $X$ is a compact tree then $W(f)\ne AP(f)$ implies LYC, but does not imply $h(f)>0$.

{\rm (ii)} \ If $X$ is a dendrit, or a topological graph containing a circle then $W(f)\ne AP(f)$ implies neither LYC nor $h(f)>0$.
\end{theorem}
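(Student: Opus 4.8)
The plan is to mimic the proofs of Theorems~\ref{entropie} and~\ref{LiYorke}, substituting the structure theory of $\omega$-limit sets for maps of trees and graphs (\cite{HrMa}, \cite{KKM}) wherever the interval results were invoked. For part~(i): if $h(f)>0$ then $f$ is LYC by \cite{BGKM}, and the fact that $W(f)\ne AP(f)$ need not imply $h(f)>0$ is already witnessed by the interval map built in Theorem~\ref{LiYorke}(ii), since $I$ is a compact tree; so the whole task is to prove that a tree map $f$ with $h(f)=0$ and $W(f)\ne AP(f)$ is Li--Yorke chaotic. First I would note, just as in the proof of Theorem~\ref{gen1}(i), that on a tree $h(f)=0$ forces $R(f)=UR(f)$ \cite{HrMa}, hence $W(f)=QW(f)=UR(f)$ by~(\ref{eq10}); thus $W(f)\ne AP(f)$ yields an infinite minimal set $\widetilde\omega$ with a point outside $AP(f)$ (finite minimal sets are periodic orbits and lie in $AP(f)$).

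The heart of (i) is the tree version of the interval argument. By \cite{HrMa}, such an $\widetilde\omega$ is solenoidal: it carries an associated system $\{J_n\}_{n\ge1}$ of compact periodic subtrees, $J_n$ of period $p_n$ with $p_n\mid p_{n+1}$, $p_n\to\infty$, and $\widetilde\omega\subseteq\bigcap_n\bigcup_{0\le j<p_n}f^j(J_n)$; every $x\in\widetilde\omega$ has an itinerary and lies in a corresponding nested intersection $Q_x$. If $\widetilde\omega\cap Q_x$ were a single point for every $x$, then $f|_{\widetilde\omega}$ would be conjugate to an adding machine and $\widetilde\omega\subseteq AP(f)$, contradicting the choice of $\widetilde\omega$; so for some $x_0$ the subtree $Q_{x_0}$ is wandering and $\widetilde\omega\cap Q_{x_0}$ contains two points $a\ne b$. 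The arc $[a,b]\subseteq Q_{x_0}$ is then a wandering arc with endpoints in $\widetilde\omega$, playing exactly the role of the nondegenerate interval in~(\ref{eq12}); the uncountable scrambled set is assembled verbatim as in \cite{smital} and the proof of Theorem~\ref{LiYorke}, attaching to each member of an uncountable family of binary sequences a point of $\widetilde\omega$ whose itinerary follows that of $a$ or of $b$ along the prescribed levels. I expect the only real obstacle to be bibliographic: extracting from \cite{HrMa} precisely that zero entropy on a tree gives this solenoidal picture together with the adding-machine-versus-wandering-arc dichotomy; once that is available, the chaos construction is routine.

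For part~(ii) I would produce, in each class, a zero-entropy non-LYC map with $W(f)\ne AP(f)$. If $X$ is a topological graph containing a circle, take $X=S^1$ and $f$ an irrational rotation: $f$ is an isometry, hence not LYC and $h(f)=0$; $f$ is minimal, so $W(f)=X$; but for every $x$ and every $k\ge1$ the set $\{f^{kj}(x)\}_{j\ge0}$ is dense, so $AP(f)=\emptyset$. If $X$ is a dendrite, I would use a map whose unique infinite $\omega$-limit set is a minimal Cantor set $M$ on which $f$ is conjugate to a Sturmian subshift, all other trajectories tending to a fixed point $p\notin M$; such an $f$ on a suitable dendrite is obtained from the techniques of \cite{KKM}. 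Then $h(f)=h(f|_M)=0$; since $f|_M$ is not equicontinuous, no point of $M$ lies in $AP(f)$ (for each $k$ the $f^k$-orbit closure of a point of $M$ is an infinite perfect set, so the orbit is never eventually trapped in a small neighbourhood), whereas $M\subseteq UR(f)\subseteq W(f)$, so $W(f)\ne AP(f)$; and $f$ is not LYC because a Sturmian system has no Li--Yorke pair (its proximal pairs are asymptotic) while any pair not contained in $M$ is eventually bounded away from $M$ or asymptotic to $p$. Here the delicate point is the realization of the dendrite example; the rest is a direct verification.
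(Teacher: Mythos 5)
Your proposal is correct and follows essentially the same route as the paper: part (i) reduces to $h(f)=0$, invokes the solenoidal structure of infinite $\omega$-limit sets of zero-entropy tree maps from \cite{HrMa}, and transplants the interval argument of Theorem~\ref{LiYorke} (the non-implication of $h(f)>0$ being witnessed by the interval example, $I$ being a tree); part (ii) uses the irrational rotation for graphs containing a circle and a \cite{KKM}-type dendrite with a single infinite minimal set plus an attracting fixed point. The one divergence is the choice of minimal system on the dendrite: the paper re-embeds the solenoidal minimal set $\widetilde\omega$ from the proof of Theorem~\ref{LiYorke}(ii) (an almost one-to-one extension of an adding machine, which already intersects $UR(f)\setminus AP(f)$ and has only asymptotic proximal pairs), whereas you use a Sturmian subshift; both work for the same structural reason (a zero-entropy, totally minimal, almost automorphic system with no Li--Yorke pairs and $AP=\emptyset$ on it), so this is a cosmetic rather than substantive difference, and the delicate points you flag --- extracting the solenoid dichotomy from \cite{HrMa} and realizing the dendrite example --- are exactly the ones the paper also leaves to the cited references.
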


\begin{proof}
(i) Similarly as in the proof of Theorem \ref{LiYorke} we may assume that $h(f)=0$. Then every infinite $\omega$-limit set of $f$ is a solenoid and the argument, with obvious modifications, applies. 

(ii) If $X$ is the circle, take $f$ to be an irrational rotation. Then obvioulsy $X=UR(f)\setminus AP(f)=W(f)\setminus AP(f)$ but $f$ is not {\it LYC}. On the other hand, 
let $\widetilde\omega$ be the $\omega$-limit set used in the proof of part (ii) of Theorem \ref{LiYorke}. Thus, $\widetilde\omega$ is a minimal set intersecting $UR(f)\setminus AP(f)$. A modification of the construction from \cite{KKM} yields a dendrite with exactly two $\omega$-limit sets, an infinite minimal set $Q=\widetilde\omega$ and a fixed point $q$ (see proof of part (ii) of preceding theorem).  It is easy to see that $f$ is not {\it LYC}.
\end{proof}

\begin{remark}
By Theorems \ref{gen1} and \ref{gen2}, for a map $f\in\mathcal C(X)$ where $X$ is a compact metric space, the properties $h(f)>0$ and $W(f)\ne AP(f)$ are independent.
Similarly, $h(f)>0$ and $IR(f)\ne\emptyset$ are independent. Example of a map $f$ with $h(f)=0$ and $IR(f)\ne\emptyset$ is given in \cite{lenka} (see also text at the end of Section 1), and any  minimal map $f$ with $h(f)>0$ yields $IR(f)=\emptyset$.
\end{remark}

\bigskip

{\bf Acknowledgments}

\bigskip

The author thanks Professor Jaroslav Sm\'{\i}tal for his heedful guidance and helpful suggestions. 

\thebibliography{99}

\bibitem{BGKM} Blanchard F., Glasner E., Kolyada S. and Maass A., On Li-Yorke pairs, J. Reine Angew. Math., 547 (2002),  51--68.

\bibitem{block} Block L.S. and Coppel W.A., Dynamics in One Dimension, Springer-Verlag, Berlin Heidelberg, 1992.

\bibitem{brucknersmital} Bruckner A. M. and Sm\'{i}tal J.,  A characterization of $\omega$-limit sets of maps of the interval with zero topological entropy, Ergod. Th. \& Dynam. Sys., 13 (1993), 7--19.

\bibitem{HrMa} Hric R. and M\'alek M., Omega-limit sets and distributional chas on graphs, Topology Appl. 153 (2006), 2469 -- 2475.

\bibitem{KKM} Ko\v can Z., Korneck\'a-Kurkov\'a V. and M\'alek M.,  Entropy, horseshoes and homoclinic trajectories on trees, graphs and dendrites, Ergodic Theory \& Dynam. Syst. 30 (2011), to appear.

\bibitem{MS} Misiurewicz M. and  Sm\'{\i}tal J., Smooth chaotic mappings with zero topological entropy, Ergod. Th. \& Dynam. Sys., 8 (1988), 421--424.

\bibitem{lenka} Obadalov\'{a} L. and Sm\'{i}tal J.,  Distributional chaos and irregular recurrence, Nonlin. Anal. A - Theor. Meth. Appl., 72 (2010), 2190--2194. 

\bibitem{smital} Sm\'{i}tal J., Chaotic functions with zero topological entropy, Trans. Amer. Math. Soc. 297 (1986), 269 -- 282.

\bibitem{zhou2} Zhou Z., Weakly almost periodic point and measure centre, Science in China (Ser. A), 36 (1993), 142 -- 153.

\bibitem{zhou3} Zhou Z. and Li F., Some problems on fractal geometry and topological dynamical systems. Anal. Theor. Appl. 25 (2009), 5 - 15.

\bibitem{zhou} Zhou Z. and Feng L., Twelve open problems on the exact value of the Hausdorff measure and on topological entropy, Nonlinearity 17 (2004), 493--502.

 \end{document}